\theoremstyle{theorem}
\newtheorem{thm}{Theorem}    
\newtheorem{lemma}{Lemma}
\theoremstyle{definition}
\newtheorem{definition}{Definition}
\newtheorem{remark}{Remark}
\def\d{\delta}
\def\X{\mathcal X}
\newcommand\G{\bar G}
\newcommand\s{\sigma}
\def\<{\langle}
\def\>{\rangle}
\begin{document}
\begin{frontmatter}          
%
\title{Cyclic curves over the reals}

\runningtitle{Cyclic curves over the reals}


\author[A]{\fnms{M.} \snm{Izquierdo}} 
\author[B]{\fnms{T.} \snm{Shaska}}

\runningauthor{Izquierdo/Shaska}

\address[A]{Department of Mathematics, \\
Link\"oping University, Link\"oping, Sweden. \\
E-mail: milagros.izquierdo@liu.se}

\address[B]{Department of Mathematics and Statistics, \\
Oakland University, Rochester, MI, USA;\\  
E-mail: shaska@oakland.edu}

\begin{abstract}
In this survey paper we study the automorphism groups of real curves admitting a regular meromorphic function $f$ of degree $p$, so called real cyclic $p$-gonal curves.  When $p=2$ the automorphism groups of real hyperelliptic curves where given by Bujalance et al. in \cite{BCGG}.

\end{abstract}

\begin{keyword}
cyclic curves, automorphisms
\end{keyword}


\end{frontmatter}


\section{Introduction}

Let $p$ be a prime number. A closed Riemann surface $X$ that can be realised as a $p$-sheeted covering of the Riemann sphere (i.e the projective line) is called $p$-gonal, and such a covering is called a $p$-gonal morphism. If the $p$-gonal morphism is a regular cyclic covering the Riemann surface is called a cyclic $p$-gonal surface. For $p=2$ these surfaces are called hyperelliptic and the covering is induced by the hyperelliptic involution. A $p$-gonal Riemann surface is called {\it real p-gonal} if there is an anticonformal involution $\s$ of $X$ commuting with the $p$-gonal morphism. $p$-gonal and real $p$-gonal Riemann surfaces have been extensively studied (see \cite{Accola, ano05, BCGG, Kontogeorgis, GSS, BCG, BCI, Shaska, SaS, Wootton, Wootton2, Broughton, BW}), specially hyperelliptic surfaces because of their applications to codes and cryptography, see \cite{ES}. 

A Riemann surface $X$ is cyclic $p$-gonal if and only if it is represented by an algebraic curve given by an equation of the form  
\[ y^{p}=\prod (x-a_{i})...\prod (x-m_{j})^{p-1}.\]
The projection $ (x,y) \to x$ is the $p$-gonal morphism. Let $\omega = \exp{{2\, i\pi\over p}}$. The automorphism of $X$ defined by $(x,y) \to (x, \omega y)$ generates the deck-transformation group of $p$-gonal morphism.

Let $X$ be a Riemann surface of genus $g\ge 2$. A symmetry $\s$ of $X$ is an anticonformal involution of $X$. The topological type of the symmetry $\s$ is given by the number of connected components, {\it ovals}, in the fixed-point set  $Fix(\s)$ and the orientability of the Klein surface $X/ \< \s\>$. We say that a symmetry $\s$ of a Riemann surface $X$ has {\it species} $0, +k$ or $-k$ (we denote $sp(\s)=0, \, sp(\s)=+k$, respectively $sp(\s)=+k$) if $\s$ has no fixed points, the fixed-point set of $\s$ consists of $k$ separating ovals or the fixed-point set consists of $k$ non-separating ovals respectively. The quotient Klein surface $X/ \< \s\>$ can be represented by a real curve where each oval corresponds to a componet of the real curve. Each real curve is a quotient $X/ \< \s\>$ and two real curves are birationally equivalent (isomorphic) if and only if they are quotients of a real Riemann surface $X$ by conjugate symmetries in $Aut^{\pm}(X)$.  A Riemann surface equipped with a symmetry $\s$ is called a {\it symmetric} or {\it real}  Riemann surface. A Riemann surface is a real Riemann surface if and only if it is represented by an algebraic curve with real equation. The complex conjugation induces a symmetry on the real Riemann surface.

The aim of this article is to give the automorphism groups of cyclic $p$-gonal real curves for $p$ an odd prime number. The case of hyperelliptic real curves was done by Bujalance et al. \cite{BCGG}. First of all,  a real curve $\mathcal{C} $ is the quotient a real Riemann surface $X$ by a symmetry $\s$ in a determined conjugacy class of symmetries;  real curves are in bijection with pairs  $(X, \s)$. To calculate the automorphism groups of real curves we use the extended automorphism groups $Aut^{\pm}(X)$ of Riemann surfaces obtained by Bartolini-Costa-Izquierdo (\cite{BCI}). Now, we shall consider a representative $\s$ of the conjugacy class of symmetries of $X$ yielding $\mathcal{C}$. The automorphism group of the real curve is the normalizer of $\< \s \>$ in $Aut^{\pm}(X)$.

\section{Trigonal Real Riemann surfaces and Fuchsian and NEC groups}

Let $\X_g$ be a compact Riemann surface of genus $g\geq 2$. The surface  $\X_g$ can be represented as a quotient $\X_g=\mathbb{H}/\Gamma^{+} $ of the
hyperbolic plane $\mathbb{H}$ under the action of a Fuchsian group $\Gamma^{+}   $, that is, a cocompact orientation-preserving subgroup of the group 
$\mathcal{G}=\mathrm{Aut}(\mathbb{H})$ of conformal and anticonformal
automorphisms of $\mathbb{H}$. A discrete, cocompact subgroup $\Gamma $ of 
$\mathrm{Aut}(\mathcal{D})$ is called an \textit{NEC (non-euclidean
crystallographic) group}. The subgroup of $\Gamma $ consisting of the
orientation-preserving elements is called the \textit{canonical Fuchsian
subgroup of} $\Gamma $. The algebraic structure of an NEC group and the
geometric and topological structure of its quotient orbifold are given by
the signature of $\Gamma$: 
\begin{equation}
s({\Gamma })=(h;\pm ;[m_{1}, \dots , m_{r}];\{(n_{11}, \dots , n_{1s_{1}}), \dots , (n_{k1}, \dots , n_{ks_{k}})\}).
\label{sign}
\end{equation}
The orbifold $\mathbb{H}/\Gamma $ has underlying surface of genus $h$, having $r$ cone points and $k$ boundary components, each with  $s_{j}\geq 0$ corner points, $j=1, \dots , k$. The signs $^{\prime \prime }+^{\prime \prime }$ and $^{\prime \prime }-^{\prime \prime }$ correspond to orientable and non-orientable orbifolds respectively. The integers $m_{i}$
are called the proper periods of ${\Gamma }$ and they are the orders of the cone points of $\mathbb{H}/\Gamma $. The brackets $(n_{i1}, \dots , n_{is_{i}})$
are the period cycles of ${\Gamma .}$ The integers $n_{ij}$ are the link periods of ${\Gamma }$ and the orders of the corner points of $\mathbb{H}/\Gamma $. The group $\Gamma $ is called the \textit{(orbifold-) fundamental group} of
 $\mathbb{H}/\Gamma $.

A group $\Gamma $ with signature (\ref{sign}) has a \textit{canonical presentation} with generators:
\begin{center}
$x_{1}, \dots , x_{r},e_{1}, \dots , ,e_{k},c_{ij},1\leq i\leq k,1\leq j\leq s_{i}+1$

and $a_{1},b_{1}, \dots , a_{h},b_{h}$ if $\mathbb{H}/\Gamma $ is orientable or $ d_{1}, \dots , d_{h}$ otherwise,
\end{center}
and relators:
\begin{center}
$x_{i}^{m_{i}},i=1, \dots , r,c_{ij}^{2},(c_{ij-1}c_{ij})^{n_{ij}},c_{i0}e_{i}^{-1}c_{is_{i}}e_{i}  $, $i=1, \dots , k,j=2, \dots , s_{i}+1$
\end{center}
and 
$x_{1}...x_{r}e_{1}...e_{k}a_{1}b_{1}a_{1}^{-1}b_{1}^{-1}...a_{h}b_{h}a_{h}^{-1}b_{h}^{-1}$ or $x_{1}...x_{r}e_{1}...e_{k}d_{1}^{2}...d_{h}^{2}$, according to whether  $\mathbb{H}/\Gamma $ is orientable or not.

The hyperbolic area of the orbifold $\mathbb{H}/\Gamma $ coincides with the hyperbolic area of an arbitrary fundamental region of $\Gamma $ and equals: 
\begin{equation}
\mu (\Gamma )=2\pi \left(\varepsilon h-2+k+\sum_{i=1}^{r}{ \left(1-{\frac{1}{{m_{i}}}} \right)} +{\frac{1}{2}}\sum_{i=1}^{k}\sum_{j=1}^{s_{i}}{ \left(1-{\frac{1}{{n_{ij}}}}\right)} \right),
\label{area}
\end{equation}
where $\varepsilon =2$ if there is a $^{\prime \prime }+^{\prime \prime }$
sign and $\varepsilon =1$ otherwise. If $\Gamma ^{\prime }$ is a subgroup of 
$\Gamma $ of finite index then $\Gamma ^{\prime }$ is an NEC group and the
following Riemann-Hurwitz formula holds: 
\begin{equation}
\lbrack \Gamma :\Gamma ^{\prime }]=\mu (\Gamma ^{\prime })/\mu (\Gamma ).
\label{HR}
\end{equation}

An NEC or Fuchsian group $\Gamma $ without elliptic elements is called a \textit{surface} NEC or Fuchsian group, and it has signature $(h;\pm;[-],\{(-),\overset{k}{\ldots },(-)\})$ or $(h;[-])$. Given a Riemann (resp. Klein) surface represented as the orbifold $X=\mathbb{H}/\Gamma $, with  $\Gamma $ a  surface Fuchsian (resp. NEC) group, a finite group $G$ is a group
of automorphisms of $X$ if and only if there exists an NEC group $\Delta$ and an epimorphism $\theta :\Delta \to G$ with $\ker (\theta)=\Gamma $, see \cite{BEGG}. The NEC group $\Delta $ is the lifting of $G$ to the universal covering $\pi : \mathbb{H}\rightarrow \mathbb{H}/\Gamma $ and is called the  \textit{universal covering transformations group} of $(X,G)$.

\begin{definition}
For a prime $p$, a real cyclic $p$-gonal Riemann surface is a triple $(X,f,\s )$ where $\s $ is a symmetry of $X$, $f$ is a cyclic $p$-gonal morphism and $f\circ \s =c\circ f$, and $c$ is the complex
conjugation.
\end{definition}

Notice that by Lemma 2.1 in \cite{Accola} the condition $f\circ \s =c\circ f$ is automatically satisfied for genera $g\geq (p-1)^{2}+1$, since the $p$-gonal morphism is unique. From now on, the genera will satisfy $g\geq (p-1)^{2}+1$; as a consequence of that, the group $C_{p}$ of deck-transformations of the $p$-gonal morphism, {\it the group of $p$-gonality}, is a normal subgroup of $Aut^{\pm}(\X_g)$, see \cite{Wootton} and \cite{CI2}.  

If a (surface) Fuchsian group $\Gamma^{+}$ uniformizes a real Riemann surface (complex curve) $X$ admitting a symmetry $\s$, then the NEC group $\Gamma =\< \Gamma^{+}, \s \>$ is the uniformizing group of the Klein surface (real curve) $X/\s$. 
 

We give now a characterization of real cyclic $p$-gonal Riemann surfaces represented by means of NEC groups.  The following is a straightforward generalization of the characterization theorems for real cyclic trigonal Riemann surfaces given in \cite{CI}, see also \cite{CI2}. 

\begin{thm}\label{thm1}
1)  Let $p$ be a prime number  and let $X$ be a Riemann surface with genus $g\geq (p-1)^{2}+1$. A Riemann surface $X$ admits a meromorphic function $f$ such that $(X, f)$ is a cyclic $p$-gonal Riemann surface if and only if there is a Fuchsian group $\Delta ^{+}$ with signature 
\begin{equation}
(0,[p,p,\overset{u}{...},p]),  \label{cyclic}
\end{equation}
with $(p-1)(u-2)=2g$, and a normal surface subgroup $\Gamma $ of index $p$ in $\Delta ^{+}$, such that $\Gamma $ uniformizes $X$.

2)  Let $p$ be a prime number and let $X$ be a Riemann surface with genus $g\geq (p-1)^{2}+1$. The surface $X$ admits a symmetry $\s $ and a meromorphic function $f$ such that $(X,f,\s )$
is a real cyclic $p$-gonal Riemann surface if and only if there is an NEC group $\Delta $ with signature 
\begin{equation}
(0,+,[p,p,\overset{u}{...},p],\{(p,p,\overset{v}{...},p)\}),
\label{realcyclic}
\end{equation}
where $(p-1)(2u+v-2)=2g$, \textit{such that there is an epimorphism }$\theta :\Delta \rightarrow G$\textit{, with }$G$\textit{\ isomorphic to either } 
$D_{p}=C_p \rtimes C_2=\left\< r,s:r^{p}=s^{2}=rsrs=1\right\> $\textit{\ or } $C_{p}\times C_2=\left\< r,s:r^{p}=s^{2}=rsr^{-1}s=1\right\> $
\textit{, where }$X$\textit{\ is conformally equivalent to }$\mathbb{H}/\ker \theta $
\textit{\ and }$\ker \theta $\textit{\ is a surface Fuchsian group.}
\end{thm}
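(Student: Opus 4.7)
I will prove both parts via the standard dictionary between finite group actions on Riemann/Klein surfaces and surface-kernel epimorphisms from Fuchsian/NEC groups, combined with the Riemann--Hurwitz relation \eqref{HR}. Part 2 will reuse the data of Part 1. For Part 1, I write $X=\mathbb{H}/\Gamma$ for a surface Fuchsian group and lift the deck-transformation group $C_{p}$ of $f$ to its universal covering Fuchsian group $\Delta^{+}$, so that $\Gamma\trianglelefteq\Delta^{+}$ with index $p$. Since $X/C_{p}\cong\mathbb{P}^{1}$, the orbifold $\mathbb{H}/\Delta^{+}$ has genus $0$, and primality of $p$ forces every branch point of $f$ to be totally ramified, so the cone periods of $\Delta^{+}$ all equal $p$; if there are $u$ of them, Riemann--Hurwitz applied to $\Gamma<\Delta^{+}$ gives $(p-1)(u-2)=2g$. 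Conversely, an index-$p$ normal surface subgroup of a Fuchsian group of signature $(0,[p,\ldots,p])$ produces a $C_{p}$-action on $X=\mathbb{H}/\Gamma$ with quotient $\mathbb{P}^{1}$, and the projection is the desired cyclic $p$-gonal morphism.

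For the forward direction of Part 2, assume $(X,f,\sigma)$ is real cyclic $p$-gonal with $g\ge(p-1)^{2}+1$. By the remark following the definition, $C_{p}$ is normal in $\mathrm{Aut}^{\pm}(X)$, so $\sigma$ normalizes $C_{p}$ and $G:=\langle C_{p},\sigma\rangle$ has order $2p$. Conjugation by $\sigma$ induces an automorphism of order dividing $2$ on a generator of $C_{p}$, hence either the identity or inversion, giving $G\cong C_{p}\times C_{2}$ or $G\cong D_{p}$. Lifting $G$ to its universal covering transformations group yields an NEC group $\Delta\supset\Delta^{+}$ of index $2$, together with a surface-kernel epimorphism $\theta:\Delta\to G$ with kernel $\Gamma$. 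I then read off the signature from the quotient orbifold $X/G$: the identity $f\circ\sigma=c\circ f$ says that the induced antiholomorphic involution on $\mathbb{P}^{1}=X/C_{p}$ is complex conjugation, so $X/G$ has underlying genus $0$ and is orientable with a single boundary component (the ``$+$'' sign); the cone points of $\Delta^{+}$ fixed by this involution descend to corner points of period $p$ on the boundary, while the remaining ones pair up into $u$ interior cone points of period $p$. Denoting by $v$ the number of fixed ones, one obtains the signature \eqref{realcyclic}, and the relation $(p-1)(2u+v-2)=2g$ follows from Part 1 applied with $2u+v$ cone periods.

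For the converse of Part 2, I set $\Gamma:=\ker\theta$, $X:=\mathbb{H}/\Gamma$, and $\Delta^{+}:=\theta^{-1}(\langle r\rangle)$. Then $\Delta^{+}$ is a Fuchsian group of signature $(0,[p,\ldots,p])$ with $\Gamma\trianglelefteq\Delta^{+}$ of index $p$, so Part 1 gives the cyclic $p$-gonal morphism $f:X\to X/C_{p}\cong\mathbb{P}^{1}$. The image under $\theta$ of any canonical reflection of $\Delta$ is an involution in $G\setminus\langle r\rangle$; it is anticonformal (as it lies in the image of $\Delta\setminus\Delta^{+}$), hence a symmetry $\sigma$ of $X$ normalizing $C_{p}$, which therefore descends to an antiholomorphic involution of $\mathbb{P}^{1}$. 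After a M\"obius change of coordinate this is complex conjugation $c$, yielding $f\circ\sigma=c\circ f$. The main obstacle will be the signature bookkeeping in Part 2: one must check that under $\theta$ the images of the cone generators $x_{i}$, of the reflections $c_{ij}$, and of each consecutive product $c_{ij-1}c_{ij}$ attain \emph{exactly} the prescribed period, so that $\ker\theta$ is torsion-free. Since for $p$ odd the only involution of $C_{p}\times C_{2}$ is $s$, any two canonical reflections must map to $s$ and hence their product has order $1$; this is consistent with \eqref{realcyclic} only when $v=0$, and this condition is precisely what selects between $D_{p}$ and $C_{p}\times C_{2}$ as the admissible target of $\theta$.
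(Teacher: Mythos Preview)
The paper does not give a proof of Theorem~\ref{thm1}; it simply states that the result is ``a straightforward generalization of the characterization theorems for real cyclic trigonal Riemann surfaces given in \cite{CI}, see also \cite{CI2}.'' Your proposal supplies that omitted argument and does so correctly along the expected lines: lift the action of $C_p$ (Part~1) or of $G=\langle C_p,\sigma\rangle$ (Part~2) to the universal cover, identify the resulting Fuchsian/NEC group as the orbifold fundamental group of the quotient, and read off the signature and the Riemann--Hurwitz relation \eqref{HR}.

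Two points worth recording. First, in the forward direction of Part~2 your identification of $X/G$ with $\mathbb{P}^1/c$ (a disk, hence sign ``$+$'', genus $0$, one period cycle) and the dichotomy ``real branch point $\Rightarrow$ link period $p$; conjugate pair of branch points $\Rightarrow$ proper period $p$'' is exactly the mechanism that produces signature \eqref{realcyclic}; passing to the canonical Fuchsian subgroup gives the $(0,[p^{2u+v}])$ signature of Part~1 and hence $(p-1)(2u+v-2)=2g$. Second, your closing observation---that for $G\cong C_p\times C_2$ all canonical reflections must map to the unique involution $s$, forcing consecutive products to have order $1$ and hence $v=0$---is precisely the content of the paper's Remark~1(ii). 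So your argument not only proves the theorem but also recovers the refinement that the paper states separately.
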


\begin{remark}
i)  Observe that in the for real hyperelliptic curves there is only one case since $D_2=C_{2}\times C_2$.

ii)  In the case of $C_{2p} =\< C_{p}, \s \>$, then the signature \ref{realcyclic} of the NEC group $\Delta$ becomes  $(0,+,[p,p,\overset{u}{...},p],\{(-)\})$, with $(p-1)(u-1)=g$; then the genus $g$ is even.
\end{remark}

\begin{remark}
Let $p$ be an odd prime number. Consider a real cyclic p-gonal Riemann surface $(X, f, \s)$ with $p$-gonality group $C_p=\< \varphi \>$ where $\varphi: (x,y) \to (x, \omega y)$, with $\omega=\exp{{2\, i\pi\over p}}$, and equation $y^p=p(x)$. 
In the case $\< \varphi, \s \> = D_{p}$, the symmetry $\s$ is defined by $\s: (x,y) \to (\overline{x}, \overline{y})$ and $\overline{y}^p =p(\overline{x})=\overline{p(x)}$, that is $p(x)$ is a {\bf real polynomial}.
However, in the case $\< \varphi, \s \> = C_{2p}$ the symmetry $\s$ becomes $\s: (x,y) \to (\overline{x}, y)$. thus we have $y^p =p(\overline{x})=p(x)$. Complex polynomials $p(x)$ satisfying $p(\overline{x})=p(x)$ are {\bf not real} in general.
\end{remark}

The study of symmetry type of Riemann surfaces goes back to Klein. Let $(X,f,\s )$ be a real cyclic $p$-gonal Riemann surface. The $p$-gonality of $X$ imposes severe constrains on the type of symmetries $X$ admits. For the case of real hyperelliptic Riemann surfaces the  symmetry type was completely studied by Bujalance et al \cite{BCGG} since they gave the list of automorphism groups of hyperelliptic real curves depending on the ramification. 
In the case of an odd prime $p$ we have:

\begin{thm}\label{thm2}
Let $(X,f,\s )$ be a real cyclic $p$-gonal surface, with $X$ of genus 
$g\geq (p-1)^2+1$. Then the possible species of $\s $ are: $-1$ or $-p$ for odd genus $g$
and $\pm p$  and $\pm 1$ for even genus $g$.
\end{thm}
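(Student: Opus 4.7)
The plan is to combine the NEC-group description of Theorem~\ref{thm1} with an Euler-characteristic argument. By Theorem~\ref{thm1}, we have an NEC group $\Delta$ with signature $(0,+,[p^u];\{(p^v)\})$ and an epimorphism $\theta:\Delta\to G$ with $G\in\{D_p,\,C_{2p}\}$ whose kernel $\Gamma^{+}$ uniformises $X$. Write $\Delta_\sigma:=\theta^{-1}(\langle\sigma\rangle)$. Since $\theta(x_i)$ and each $\theta(c_{j-1}c_j)$ have order $p>2$, no $\Delta$-conjugate of an elliptic or corner element of $\Delta$ lies in $\Delta_\sigma$; hence $\Delta_\sigma$ is a surface NEC group uniformising the Klein surface $X/\langle\sigma\rangle$. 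The number of ovals of $\sigma$ equals the number of period cycles of $\Delta_\sigma$, and the sign of the species equals the sign of $\Delta_\sigma$.

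Next, I would show that the number of ovals $k$ always lies in $\{1,p\}$. The reflections of $\Delta_\sigma$ are $\Delta$-conjugates of canonical reflections whose $\theta$-image is $G$-conjugate to $\sigma$. For $G=D_p$, since $p$ is odd every involution of $D_p$ is conjugate to $\sigma$ and $C_G(\sigma)=\langle\sigma\rangle$; the canonical reflections $c_0,\ldots,c_v$ form a single $\Delta$-conjugacy class because $(c_{j-1}c_j)^p=1$ forces $\langle c_{j-1},c_j\rangle\cong D_p$ (in which all involutions are conjugate), and $c_0, c_v$ are linked by $e_1$. A double-coset computation $\Delta_\sigma\backslash\Delta/\langle c_0,e_1\rangle$ then collapses all reflections of $\Delta_\sigma$ into a single $\Delta_\sigma$-conjugacy class, giving $k=1$. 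For $G=C_{2p}$, by Remark~ii the period cycle is empty ($v=0$), $\sigma$ is the unique involution, and $\sigma$ is central; counting orbits of $\theta(\langle c,e_1\rangle)=\langle\sigma,\theta(e_1)\rangle$ on $G/\langle\sigma\rangle\cong C_p$ yields $k=p$ when $\theta(e_1)=\bigl(\theta(x_1)\cdots\theta(x_u)\bigr)^{-1}=1$, and $k=1$ otherwise. In either case $k\in\{1,p\}$ is odd.

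An Euler-characteristic calculation now fixes the sign. The Klein surface $X/\langle\sigma\rangle$ has $\chi=1-g$; were it orientable of genus $g'$ with $k$ boundary components, then $1-g=2-2g'-k$ would force $g'=(g+1-k)/2$, which is an integer only when $g+1-k$ is even, i.e.\ when $g$ is even (as $k$ is odd). Consequently $X/\langle\sigma\rangle$ must be non-orientable for $g$ odd, yielding $sp(\sigma)\in\{-1,-p\}$, while for $g$ even both orientability types are possible, giving $sp(\sigma)\in\{\pm1,\pm p\}$. To complete the proof I would exhibit epimorphisms $\theta$ realising each allowed species, using Remark~iii to translate the combinatorial data into explicit real equations $y^p=p(x)$ with prescribed distribution of real/complex roots. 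The main technical obstacle is the double-coset bookkeeping in the non-abelian $D_p$ case needed to establish $k=1$; once $k\in\{1,p\}$ is known, the sign conclusion follows from pure topology.
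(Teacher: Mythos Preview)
Your overall architecture---determine the oval count $k$ from the NEC data, then read off the sign from the Euler characteristic $\chi(X/\langle\sigma\rangle)=1-g$---is sound, and the parity argument for the sign is in fact cleaner than the Schreier coset-graph analysis the paper invokes from \cite{HS} and \cite{BCS}. But there is a genuine error in your oval count for $G=D_p$.

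You assert that in the $D_p$ case the double-coset computation always collapses everything to a single $\Delta_\sigma$-conjugacy class of reflections, forcing $k=1$. This fails precisely when the period cycle is empty ($v=0$). In that situation $\Delta$ has a single canonical reflection $c_0$, the relation $c_0 e_1^{-1} c_0 e_1=1$ forces $\theta(c_0)$ and $\theta(e_1)$ to commute in $D_p$, and since $p$ is odd this means $\theta(e_1)=1$. The boundary subgroup then maps to $\langle\sigma\rangle$ alone, and one obtains $p$ boundary components in $\Delta_\sigma$, not one: the symmetry has $p$ ovals. (This is exactly the $\theta_3$ example in the paper, with signature $(0,+,[p,\dots,p],\{(-)\})$ and $\theta_3:\Delta\to D_p$, producing species $-p$.) Your argument that the $c_j$ are mutually $\Delta$-conjugate via the dihedral subgroups $\langle c_{j-1},c_j\rangle$ implicitly assumes there are link periods to work with; when $v=0$ there is nothing to chain.

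This gap is fatal rather than cosmetic: for odd genus the case $G=C_{2p}$ is excluded (Remark~ii forces $g=(p-1)(u-1)$ even), so the species $-p$ asserted in the theorem for odd $g$ can \emph{only} arise from the $D_p$ case with empty period cycle. Your analysis as written would predict that $-p$ never occurs for odd $g$, contradicting the statement you are trying to prove. The fix is to split the $D_p$ analysis into $v>0$ (where your $k=1$ conclusion is correct, matching the paper's citation of \cite{BCS}) and $v=0$ (where $k=p$ and the sign is $-$ because $u\geq1$ forces proper periods). Once that is done, your Euler-characteristic argument goes through unchanged.
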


\begin{proof}
By Theorem~\ref{thm1}, there exists an NEC\ group $\Delta $ with signature 
\begin{equation*}
(0,+,[p,3,\overset{u}{...},p],\{(p,p,\overset{v}{...},p)\}),
\end{equation*}
with $(p-1)(2u+v-2)=2g$, and an epimorphism $\theta :\Delta \rightarrow G$, where $G$\ is isomorphic to either  $D_{p}$\ or $C_{2p}$, and $X$ is conformally equivalent to $\mathbb{H}/\ker \theta $, with $\ker \theta $ a surface Fuchsian group. We consider two cases:

\noindent \textbf{Case 1}. $G=D_{p}=\< r,s:r^{p}=s^{2}=rsrs=1\>$. The species of $\s $ is determined by the signature of the group $\theta ^{-1}(\< s\> )=\Lambda $ since $X/\<
\s \> =\mathbb{H}/\Lambda .$ If there are periods, they are odd, thus by \cite{BCS}, there is a unique conjugacy class of generating reflections in $\Lambda $, so the species of $\s $ is $\pm 1$. If there are no link periods and the connecting generator $e$ is mapped to the identity then the symmetry has $p$ ovals (see \cite{IS}). The sign in the species of $\s $ is $-$ if
and only if there is a cycle (not a loop) of odd length in the Schreier coset graph $\mathcal{G}$  of  $\< s \> $ in $D_{p}$ given by the action $\theta $ of $\Delta$ on the cosets $\left\{ 0=[s]  1=[rs] ,\dots, p-1=[r^{p-1}s] \right\} $ whose edges are produced by generators of $\Delta$ that are orientation reversing transformations, see Theorem 2 of \cite{HS}.  Such a cycle does exist if and only if at least one of the following three conditions are satisfied:

1. The signature of $\Delta $ contains proper periods,

2. The epimorphism $\theta $ maps the connecting generator $e$ of $\Delta $
to an element different from the identity,

3. The generating reflections of $\Delta $ are mapped by $\theta $ on more
than two involutions of $D_{p}$. \\

\noindent If one of the above conditions is satisfied, the species of $\s$ is $-1$.
If none of the conditions are satisfied, then $u=0, \, v\equiv 0\mod 2 $ in signature \ref{realcyclic}; the genus $(p-1)(v-2)=2g$ of the surface $X$ is even. \\

\noindent \textbf{Case 2}. $G=C_{2p}=\left\<
r,s:r^{p}=s^{2}=rsr^{-1}s=1\right\> $. Again, the species of $\s $ is
determined by the signature of the group $\theta ^{-1}(\left\<
s\right\> )=\Lambda .$ In this case the NEC group $\Delta $ must have
signature $(0,+,[p,p,\overset{u}{...},p],\{(-)\})$, with $(p-1)(u-1)=g$, and then
the surface $X$ must have even genus. $\s $ has is $ 1$ oval if 
$\theta (e)\neq 1$, and $ p$ ovals if $\theta (e)=1.$ Since the index of $\Lambda $ in $\Delta $ is $p$ and so odd, the
symmetry $\s $ has sign $+$ in its species by Corollary 2 of \cite{HS}. 


Now we give examples of real cyclic $p$-gonal surfaces with desired species of the symmetry . Let $\Delta $ be an NEC group with
signature $(0,+,[p,p],\{(p,\overset{{2(g-p+1)\over p-1}}{...},p)\}).$ and consider the epimorphism 
\[ \theta _{1}:\Delta \rightarrow \left\< r,s:r^{p}=s^{2}=rsrs=1\right\>, \]
defined by:
\[ \theta _{1}(x_{1})=r, \, \theta _{1}(c_{1})=s, \, \theta _{1}(c_{2i})=sr, \, \theta _{1}(c_{2i+1})=s, \,  \theta _{1}(e)=r^{\varepsilon }, \, \theta _{1}(x_{2})=r^{\mu }\]
  such that $\theta_{1}(c_{1}e^{-1}c_{{(2g-p+1)\over p-1}}e)=1$
and    $1+\varepsilon +\mu \equiv 0\mod p $.

By Theorem~\ref{thm1}, $\ker (\theta_{1})$ uniformizes a real cyclic $p$-gonal surface with a symmetry of
species $-1$.

Now consider an NEC group $\Delta$ with signature $(0,+,[-],\{(p,\overset{{2(g+p-1)\over p-1}}{...},p)\})$, with $g\equiv 0\mod 2 $ and
epimorphism 
\[ \theta _{2}:\Delta \rightarrow \left\< r,s:r^{3}=s^{2}=rsrs=1\right\>, \]
defined by: 
\[ \theta _{2}(c_{2i-1})=s, \, \theta _{2}(c_{2i})=sr, \, 1\le i \le {(g+p-1)\over p-1},\theta_{2}(e)=1.\]
By Theorem~\ref{thm1}, $\ker (\theta_{2})$ uniformizes a real cyclic $p$-gonal surface with a symmetry of species $+1$.

Consider now NEC groups with signature 
\[ (0,+,[p,p,\overset{{(g+p-1)\over p-1}}{...} , p],\{(-)\}),\]
with $g\equiv 0\mod 2 $, and epimorphisms $\theta _{3}:\Delta \rightarrow D_{p} $
sending the generating  reflection $c$ of $\Delta$ to $s$ and the generating elliptic elements to suitable powers of $r$ such that the the generator $e$ is sent to the identity. By Theorem~\ref{thm1}, $\ker (\theta_{2})$ uniformizes a real cyclic $p$-gonal surface with a symmetry of species $-p$.

Finally  consider NEC groups with signature 
\[ (0,+,[p,p,\overset{{(g+p-1)\over p-1}}{...} ,p],\{(-)\}),\]
with $g\equiv 0\mod 2 $, and epimorphisms $\theta _{3}:\Delta \rightarrow C_{2p} $ sending the generating  reflection $c$ of $\Delta$ to $s$ and the generating elliptic elements to suitable powers of $r$ such that the the generator $e$ is sent to either the identity, or $e$ is sent to a power of $r$.
By Theorem~\ref{thm1}, $\ker (\theta_{3})$ uniformizes a real cyclic $p$-gonal surface with a symmetry with desired species
\end{proof}


\section{Automorphism Groups of Real Curves}

Since 2001 Bujalance et al, Kontogeorgis, Wootton, Shaska, Sanjeewa and others have studied the groups of automorphisms of cyclic p-gonal Riemann surfaces allowing to find defining equations of the curves, see \cite{BCGG, Kontogeorgis, GSS, Shaska, Wootton, Wootton2, SaS, ano05, BCG, BCI}.  Gutierrez-Sevilla and Shaska \cite{GSS} found rational models of hyperelliptic curves. Shaska and Thompson \cite{ST}  have studied sub loci of hyperelliptic curves of genus 3.  

In 2001 Bujalance et al \cite{BCGG} gave the automorphism groups of hyperelliptic real curves. The gave: 
\begin{enumerate}
\item list of all groups which act as the full group $Aut^{\pm}(X)$ of  real hyperelliptic Riemann surfaces $X$ of genus $g\ge 2$
\item the classification of the symmetry types of all compact hyperelliptic Riemann surfaces of genus $g\ge 2$
\item explicit polynomial equations and explicit formulae for the hyperelliptic real curves in each symmetry class. 
\end{enumerate}

In the sequel $p$ is an odd prime integer and the genus $g \ge (p-1)^{2}+1$.
Consider a real curve $\mathcal{C}$ of genus $g$ which is cyclic $p$-gonal, that is, the real curve $\mathcal{C}$ can be considered to be the quotient $X/ \< \s \>$ of a real cyclic $p$-gonal Riemann surface $X$ by a symmetry $\s$ of $X$ in a given conjugacy class of symmetries in $Aut^{\pm}(X)$. As we say above the automorphism group of the real curve $\mathcal{C}$ is given by the (conformal and anticonformal) automorphisms of $X$ commuting with the symmetry $\s$, that is the normalizer in $Aut^{\pm}(X)$ of $\< \s \>$.

 Given a real cyclic $p$-gonal surface $(X,f,\s)$, we shall call
$\pm$-automorphism group to the group $Aut^{\pm}(X)$ of conformal and
anticonformal automorphisms of $X$. We want to find the groups of
automorphisms of real cyclic $p$-gonal Riemann surfaces. As for the case of groups of
conformal automorphisms of $p$-gonal Riemann surfaces we have (see {\cite{BCI}):

\begin{lemma}
\label{extensionklein} Let $(X, f, \s)$ be a real cyclic $p$-gonal Riemann
surface such that the $p$-gonality group is normal in $Aut^{\pm}(X)$. Then
$Aut^{\pm}(X)$ is an extension of $C_{p}$ by a group of conformal and
anticonformal automorphisms of the Riemann sphere.
\end{lemma}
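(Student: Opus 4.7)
The plan is to descend the action of $Aut^{\pm}(X)$ along the cyclic $p$-gonal morphism $f\colon X \to \mathbb{P}^1(\mathbb{C})$. By Theorem~\ref{thm1}, $f$ realizes $\mathbb{P}^1(\mathbb{C})$ as the quotient $X/C_p$, where $C_p = \langle \varphi \rangle$ is the deck transformation group of $f$, and the hypothesis is exactly that $C_p \triangleleft Aut^{\pm}(X)$.

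First I would define a map $\Phi\colon Aut^{\pm}(X) \to Aut^{\pm}(\mathbb{P}^1(\mathbb{C}))$ as follows. Given $\alpha \in Aut^{\pm}(X)$, normality of $C_p$ gives $\alpha C_p \alpha^{-1} = C_p$, so $\alpha$ permutes the fibers of $f$ and hence descends to a well-defined bijection $\bar{\alpha}$ of $X/C_p = \mathbb{P}^1(\mathbb{C})$ satisfying $f \circ \alpha = \bar{\alpha}\circ f$. Since $f$ is a holomorphic branched cover and is a local biholomorphism away from the $u$ branch points, $\bar{\alpha}$ is conformal when $\alpha$ is conformal and anticonformal when $\alpha$ is anticonformal; by continuity this extends across the branch locus. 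Thus $\bar{\alpha} \in Aut^{\pm}(\mathbb{P}^1(\mathbb{C}))$, and $\Phi$ is a group homomorphism.

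Next I would identify $\ker \Phi = C_p$. The inclusion $C_p \subseteq \ker \Phi$ is immediate. Conversely, if $\bar{\alpha}=\mathrm{id}$ then $\alpha$ preserves every fiber of $f$ setwise. For a generic point $x \in X$ the fiber $f^{-1}(f(x))$ has exactly $p$ points on which $C_p$ acts simply transitively, so there is a unique $\psi_x \in C_p$ with $\alpha(x) = \psi_x(x)$. Since two distinct elements of the finite group $C_p$ can coincide only on a nowhere-dense analytic subset of the connected surface $X$, the assignment $x \mapsto \psi_x$ is locally constant and therefore constant, so $\alpha \in C_p$. Combining these two steps yields the short exact sequence
\[
1 \longrightarrow C_p \longrightarrow Aut^{\pm}(X) \stackrel{\Phi}{\longrightarrow} \mathrm{Im}(\Phi) \longrightarrow 1,
\]
which exhibits $Aut^{\pm}(X)$ as an extension of $C_p$ by the subgroup $\mathrm{Im}(\Phi) \leq Aut^{\pm}(\mathbb{P}^1(\mathbb{C}))$, as required.

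The only delicate point is verifying that the descended map $\bar{\alpha}$ inherits the (anti)conformal character of $\alpha$; this is the main obstacle but is handled by working in local charts where $f$ is modelled either by an unramified biholomorphism or by $z \mapsto z^p$, in which case the relation $f \circ \alpha = \bar{\alpha} \circ f$ directly yields holomorphicity or antiholomorphicity of $\bar{\alpha}$. Everything else is formal bookkeeping with the quotient $X \to X/C_p$.
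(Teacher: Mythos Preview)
Your argument is correct and is the standard descent argument for this type of statement. Note that the paper itself does not supply a proof of this lemma: it simply states the result and refers to \cite{BCI}, remarking that it parallels the purely conformal case. So there is no ``paper's own proof'' to compare against here; what you have written is essentially the natural proof one would expect to find in the cited reference.

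One small simplification you might incorporate: since $\Phi$ preserves the conformal/anticonformal dichotomy (as you yourself observe when discussing local models), any $\alpha$ with $\bar\alpha=\mathrm{id}$ must already be conformal, because the identity on $\mathbb{P}^1$ is conformal. Hence $\ker\Phi\subseteq Aut^{+}(X)$ automatically, and your kernel computation reduces to the purely holomorphic case, where the coincidence set of two distinct conformal automorphisms is finite by the identity principle. This avoids having to discuss fixed-point sets of anticonformal maps at all.
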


The group of $\pm$-automorphisms of real p-gonal Riemann surfaces are known and classified according to spherical group of automorphisms of the quotient of the surface by the group of p-gonality (see \cite{CI2}). 
A finite group $\G$ of conformal and anticonformal automorphisms of
the Riemann sphere is a subgroup of:
\[ D_{q},\,C_{q}\times C_{2},\,D_{q}\rtimes C_{2},\,A_{4}\times C_{2},\,\Sigma_{4},\,\Sigma_{4}\times C_{2},\,A_{5}\times C_{2}. \]
The following theorem gives the $\pm$-automorphism groups of real cyclic $p$-gonal Riemann surfaces
\begin{thm} [\cite{BCI}]
\label{real} Let $(\X_g,f,\s)$ be a real cyclic $p$-gonal Riemann surface
with $p$ an odd prime integer, $g\geq(p-1)^{2}+1$. If the $p$-gonality group
of $\X_g$ is $\<\varphi\>$ and $\G=Aut^{\pm}
(\X_g)/\<\varphi\>$ then the possible $\pm$-automorphisms groups of
$\X_g$ are

\begin{enumerate}
\item $C_{pq}\times C_{2}$ if $\<\varphi, \s\>= C_{2p}$. 

$D_{pq}$ if $\<\varphi, \s\>= D_{p}$

\item $D_{pq}\rtimes C_{2}$, where $\rtimes$ means any possible semidirect
product (including the direct product).

\item $(C_{p}\rtimes C_{q})\rtimes C_{2},$ where $\rtimes$ means any possible
semidirect product (including the direct product).

\item $(C_{p}\rtimes D_{q})\rtimes C_{2}$, where $\rtimes$ means any possible
semidirect product (including the direct product).

\item $C_{p}\rtimes_{2}\Sigma_{4}\,D_{p}\times A_{4},\,D_{p}\times\Sigma_{4},\,D_{p}\times A_{5}$ if
$\<\varphi,\s\>=D_{p}$

$C_{p}\times\Sigma_{4},\,C_{2p}\times A_{4},\,C_{2p}\times\Sigma_{4}
,\,C_{2p}\times A_{5}$ if $\<\varphi,\s\>=C_{2p}$

\item Exceptional Case 1. $((C_{2}\times C_{2})\rtimes_{3}C_{9})\rtimes
_{2}C_{2}$ for $p=3$ and $\G=\Sigma_{4}$ where $\<
\varphi,\s\>=D_{p}$

$((C_{2}\times C_{2})\rtimes_{3}C_{9})\times C_{2}$ for $p=3$ and
$\G=A_{4}\times C_{2}$ where $\<\varphi,\s\>=C_{2p}$

\item Exceptional Case 2. $(C_{p}\times C_{2}\times C_{2})\rtimes_{3}C_{6}$ for $p\equiv 1\mod 6$, $\G=A_{4}\times C_{2}$ and
$\<\varphi,\s\>=C_{2p}$

\item Exceptional Case 3. $(((C_{2}\times C_{2})\rtimes_{3}C_{9})\rtimes
_{2}C_{2})\times C_{2}$ for $p=3$ and $\G=\Sigma_{4}\times C_{2}$.
\end{enumerate}
\end{thm}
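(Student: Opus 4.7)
The plan is to exploit the normality of the $p$-gonality group $\<\varphi\>\iso C_p$ in $Aut^{\pm}(\X_g)$, which holds under the hypothesis $g\ge(p-1)^2+1$ (as recalled just after Definition~1). Combined with Lemma~\ref{extensionklein}, this yields a short exact sequence
\[
1\longrightarrow C_p\longrightarrow Aut^{\pm}(\X_g)\longrightarrow \G\longrightarrow 1
\]
in which $\G$ is a finite group of conformal and anticonformal automorphisms of $\mathbb{P}^1$. The list of such groups is the one displayed just before the statement, so the entire proof reduces to enumerating, for each admissible $\G$, those extensions of $C_p$ by $\G$ that actually occur as $Aut^{\pm}$ of a real cyclic $p$-gonal surface.

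First I would fix a representative $\s$ of the symmetry class yielding the real curve and appeal to Theorem~\ref{thm1} to separate the two alternatives $\<\varphi,\s\>=D_p$ and $\<\varphi,\s\>=C_{2p}$. In the $C_{2p}$ alternative the element $\s$ centralises $\varphi$, so its image in $\G$ lifts to a central $C_2$ and the extension splits as a direct product whose cyclic part is $C_{2p}$; this produces $C_{pq}\times C_2$, $C_p\times\Sigma_4$, $C_{2p}\times A_4$, $C_{2p}\times\Sigma_4$ and $C_{2p}\times A_5$. In the $D_p$ alternative $\s$ inverts $\varphi$, forcing the dihedral shapes $D_{pq}$, $D_p\times A_4$, $D_p\times\Sigma_4$, $D_p\times A_5$, together with the semidirect variants $D_{pq}\rtimes C_2$, $(C_p\rtimes C_q)\rtimes C_2$ and $(C_p\rtimes D_q)\rtimes C_2$ of Cases~2--4, the ambiguity in $\rtimes$ reflecting the divisors of $p-1$ available to parametrise the action of $\G$ on $C_p$ by conjugation. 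For each candidate group $G$ so obtained, realisability is then confirmed exactly as in the proof of Theorem~\ref{thm2}: I would exhibit an NEC group $\Delta$ with signature compatible with (\ref{realcyclic}) and a smooth epimorphism $\theta:\Delta\to G$ whose kernel is a surface Fuchsian group, using the Riemann--Hurwitz identity (\ref{HR}) to match the areas.

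The hard part will be items~6--8, the three exceptional cases, which arise precisely when $p$ divides $|\G|$: Schur--Zassenhaus no longer applies and non-split extensions enter the picture. For $p=3$ with $\G=\Sigma_4$ (or $\Sigma_4\times C_2$), the extension $1\to C_3\to G\to \G\to 1$ is obstructed: the preimage of a $3$-cycle in $\Sigma_4$ is forced to be an element of order $9$, giving rise to the $C_9$ inside the sporadic group $((C_2\times C_2)\rtimes_3 C_9)\rtimes_2 C_2$ of Case~6 and its $C_2$-factor enlargement in Case~8. For $p\equiv 1\mod 6$ with $\G=A_4\times C_2$, the action on $C_p$ can attain order $6$, yielding Case~7. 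For each of these I would (i) build an explicit NEC epimorphism realising the sporadic extension and verify that its kernel is torsion-free, and (ii) rule out any competing extension of $C_p$ by the same $\G$ by showing that the alternative action is incompatible with the commutativity $f\circ\s=c\circ f$. Step~(ii), essentially a cohomological rigidity statement about $H^2(\G,C_p)$ with the prescribed action in the presence of a $p$-gonal morphism commuting with $\s$, is where the reasoning is most delicate; here I would follow the analysis of \cite{BCI} closely.
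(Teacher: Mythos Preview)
The paper does not contain a proof of this theorem: it is quoted verbatim from \cite{BCI} (note the attribution in the theorem header) and is used as input for the subsequent case-by-case determination of normalisers. So there is nothing in this paper to compare your proposal against.

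That said, your sketch follows exactly the architecture the paper sets up for the reader---Lemma~\ref{extensionklein} plus the list of spherical $\G$'s---and is the natural line of argument that \cite{BCI} presumably carries out. One small slip: you assign Cases~2--4 exclusively to the $D_p$ alternative, but the clause ``$\rtimes$ means any possible semidirect product (including the direct product)'' is there precisely because both $\<\varphi,\s\>=D_p$ and $\<\varphi,\s\>=C_{2p}$ can occur inside those shapes; the action of the outer factor on $C_p$ is not determined solely by how $\s$ alone acts. Also, your description of the $C_{2p}$ alternative as forcing a direct product ``whose cyclic part is $C_{2p}$'' is too strong: $\s$ centralising $\varphi$ constrains only the image of $\s$ in $\mathrm{Aut}(C_p)$, not the action of the rest of $\G$. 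These are refinements rather than gaps; the overall extension-theoretic strategy, including the identification of the non-split exceptional cases via $p\mid|\G|$ and the realisation step by NEC epimorphisms, is sound.
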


Now, one can calculate the automorphism groups of $p$-gonal real curves. Let $\mathcal{C}$ be the $p$-gonal real curve associated to a real cyclic $p$-gonal Riemann surface $(X, f, \s)$ with symmetry $\s$. The automorphism group of the real curve $\mathcal{C}$ consists of the (conformal and anticonformal) automorphisms of $X$ commuting with the symmetry $\s$, that is the normalizer in $Aut^{\pm}(X)$ of $\< \s \>, \, N_{G}(\< \s \>)$.

Then the symmetry type and the automorphism group of $\mathcal{C}$ are studied below according to $G =Aut^{\pm}(X)$: 

\begin{enumerate}
\item  {\bf Case 1a} $G = C_{pq}\times C_{2}=\< \widehat{\varphi}, \s \>$. In this case there is one class of symmetries (one real curve)(with representative $\s$) if $q\equiv 1\mod 2 $, and two classes of symmetries - two real curves- (with representatives $\s$ and $\tau =\s\widehat{\varphi}^{pq/2}$) if $q\equiv 0\mod 2 $. In both cases the automorphism group of the corresponding real curves is the whole group $G =C_{pq}\times C_{2} = N_{G}(\< \varphi, \s \>) = N_{G}(\< \varphi, \tau \>) $

\smallskip
\item {\bf Case 1b} $D_{pq} =\< \widehat{\varphi}, \s \>$. Again, $X$ has one or two classes of symmetries according to wether $q$ is odd or even respectively (the representatives are $\s$ and $\tau =\s\widehat{\varphi}$). The automorphisms groups of the real curves associated to $(X, \s)$ and $(X, \tau)$ equal $D_{p} =  N_{G}(\< \varphi, \s \>) = N_{G}(\< \varphi, \tau \>) $

\smallskip
\item {\bf Case 2a} $G=D_{pq}\times C_{2} =\< \widehat{\varphi}, \rho \> \times \< \s  \>$, where $q\equiv 1\mod 2 $. There are two classes of symmetries (two real curves) with representatives $\s$ and $\tau =\s \rho$. 
The real curve associated to $(X, \s)$  has automorphism group $G = N_{G}(\< \varphi, \s \>)$, and the real curve associated to $(X, \tau)$ has automorphism group $D_{p} =  N_{G}(\< \varphi, \tau \>)$.

\smallskip
\item {\bf Case 2b} $G=D_{pq}\times C_{2} =\< \widehat{\varphi}, \rho \> \times \< \s  \>$, where $q\equiv 0\mod 2 $. There are four classes of symmetries (four real curves) with representatives $\s,  \, \s\widehat{\varphi}^{pq/2}, \, \tau_1=\s\rho$ and $\tau_2=\s\rho\widehat{\varphi}$. The real curves associated to $(X, \s)$  and $(X, \s\widehat{\varphi}^{pq/2})$ have automorphism group $G = N_{G}(\< \varphi, \s \>) = N_{G}(\< \varphi, \s\widehat{\varphi}^{pq/2} \>)$. The  real curves associated to $(X, \s\rho)$ and $(X, \s\rho\widehat{\varphi})$ have autoharphism group $D_{p} =  N_{G}(\< \varphi, \tau_1 \>) = N_{G}(\< \varphi, \tau_2 \>)$.

\smallskip
\item {\bf Case 2c} $G=D_{pq}\rtimes C_{2} =\< \widehat{\varphi}, \rho, \, \s \, |   \widehat{\varphi}^{p}=\rho^{2}=\s^{2}=( \widehat{\varphi}^{p}\rho)^{2}=( \widehat{\varphi}^{p}\s)^{2}=\s\rho\s\rho \widehat{\varphi}^{-j}= 1 \>$. Using elementary group theory we get that the symmetries are $\s\varphi^{t}, \, 0\le t \le p-1$ and $\s\rho\widehat{\varphi}^{j/2}$. All the symmetries $\s\varphi^{t}, \, 0\le t \le p-1$ are conjugate. The real curve associate to their conjugacy class has sutomorphism group $G = N_{G}(\< \varphi, \s \>)$. The symmetry $\s\rho\widehat{\varphi}^{j/2}$ is central in $G$, thus the automorphism group of the real curve given by $(X, \s\rho\widehat{\varphi}^{j/2})$ is the whole $G=D_{pq}\rtimes C_{2}$.

\smallskip
\item {\bf Case 3a} $G =(C_{p}\rtimes C_{q})\times C_{2}=\< \varphi, \alpha \, | \varphi^{p} =\alpha^{q} =\alpha^{-1}\varphi \alpha \varphi^{-i} =1  \> \times \< \s  \>$.  ($i^{d}\equiv 1\mod p $ where $d=mcd(q, p-1)$). By elementary group theory one obtains that, if $q \equiv 1\mod 2 $, the only symmetry is $\s$. In this case the automorphism group of the real curve is $G = N_{G}(\< \varphi, \s \>)$. Now, let $q\equiv 0\mod 2 $. If $i =-1$ and $q\equiv 2\mod 4$ the only symmetries are $\s$ and $\s \alpha^{q/2}$, they are not conjugate so they induce two non-equivalent real curves, both with automorphism group the whole group $G$. If $i \neq -1$ or $q\equiv 0\mod 4$ there are three symmetries: $\s, \, \s \alpha^{q/2}$ and $\s \alpha^{q/2}\varphi^{h}$, with $h$ the solution of $h(i^{q/2}+1 \equiv 1modp$. Moreover, the symmetries $\s \alpha^{q/2}$ and $\s \alpha^{q/2}\varphi^{h}$ are non-conjugate if and only if $i=1$ or $i = -1$ and $q\equiv 0\mod 4$. In any case the automorphism group of the corresponding real curve is the group $G$.

\smallskip
\item {\bf Case 3b} $G =(C_{p}\rtimes C_{q})\rtimes C_{2}=\< \varphi, \alpha \, | \varphi^{p} =\alpha^{q} =\alpha^{-1}\varphi \alpha \varphi^{-i} =1  \> \rtimes \< \s  \>$ with $i^{d}\equiv 1\mod p $ where $d=mcd(q, p-1)$. First of all there are three remaining possible cases for the action of $\s$ on $\varphi$ and $\alpha$: 
\begin{enumerate}
\item $\s\varphi\s=\varphi^{-1}, \, \s\alpha\s=\alpha$. Using elementary group theory one obtains that the only symmetries are $\s\varphi^{t}, \, 0\le t \le p-1$, all them in the conjugate to each other. The automorphism group of the real curve is $G = N_{G}(\< \varphi, \s \>)$.

\item $\s\varphi\s=\varphi^{-1}, \, \s\alpha\s=\alpha^{-1}$. Using elementary group theory one obtains that the symmetries are $\s\alpha^{r}, \, 0\le r \le q-1$, and $\s\alpha^{r_0}\varphi^{t}, \, 0\le t \le p-1$, where $r_0\neq 0$ is such that $i^{r_0}\equiv -1\mod p $. Now, for even $q$, there are two conjugacy classes of symmetries in $G$ with representatives $\s$ and $\s\alpha$. The automorphism groups of both real curves coincide and equal $D_{p} \rtimes C_2=  N_{G}(\< \varphi, \s \>) = N_{G}(\< \varphi, \s\alpha \>)$ (the product maybe a direct product). For odd $q$, there is a unique class of symmetries in $G$ with representative $\s$ The automorphism group of the real curve is $D_{p}=  N_{G}(\< \varphi, \s \>)$.

\item $\s\varphi\s=\varphi, \, \s\alpha\s=\alpha^{-1}$. Using elementary group theory one obtains that the symmetries are $\s\alpha^{r}, \, 0\le r \le q-1$, and $\s\alpha^{r_0}\varphi^{t}, \, 0\le t \le p-1$, where $r_0\neq 0$ is such that $i^{r_0}\equiv -1\mod p $. Now, for even $q$, there are two conjugacy classes of symmetries in $G$ with representatives $\s$ and $\s\alpha$. The automorphism groups of both real curves coincide and equal $C_{2p} \rtimes C_2=  N_{G}(\< \varphi, \s \>) = N_{G}(\< \varphi, \s\alpha \>)$ (the product maybe a direct product). For odd $q$, there is a unique class of symmetries in $G$ with representative $\s$ The automorphism group of the real curve is $C_{2p}=  N_{G}(\< \varphi, \s \>)$.
\end{enumerate}

\smallskip
\item {\bf Case 4a} $G =(C_{p}\rtimes D_{q})\times C_{2}=\< \varphi, \alpha \, \rho \, | \varphi^{p} =\alpha^{q} =\rho^{2}=\alpha^{-1}\varphi \alpha \varphi^{-i} =(\rho\alpha)^{2}=\rho\varphi\rho\varphi^{\pm 1}=1  \> \times \< \s  \>$ 
with $i^{d} \equiv 1\mod p $ where $d=mcd(q, p-1)$. We divide the case in subcases according to the action of $\rho$  on $\varphi$: 
\begin{enumerate}

\smallskip
\item $G =(C_{p}\rtimes D_{q})\times C_{2}=\< \varphi, \alpha \, \rho \, | \varphi^{p} =\alpha^{q} =\rho^{2}=\alpha^{-1}\varphi \alpha \varphi^{-i} =(\rho\alpha)^{2}=\rho\varphi\rho\varphi^{-1}=1  \> \times \< \s  \>$. $\s$ is a central symmetry in $G$, $\s\rho\alpha^{r}, \, 0\le r\le q-1$ are symmetries as well. For semidirect products such that there are $r_0$ such that $i^{r_0}\equiv -1\mod p $ the elements $\s\rho\alpha^{r_0}\varphi^{t}, \, 1\le t \le p-1$ are also symmetries.

For $q\equiv 1\mod 2 $ the above are the only symmetries. The group $G$ contains two classes of symmetries, the central symmetry $\s$ and the conjugacy class of $\s\rho$. The automorphism groups of the corresponding real curves are $G=N_{G}(\< \varphi, \s \>)$ and $C_{2p} \times C_2=  N_{G}(\< \varphi, \s\rho \>)$.

For $q\equiv 0\mod 2 $ In this case the symmetries are the ones listed above together with $\s\alpha^{q/2}$ and, if $p\equiv 1\mod q$ (equivalently $i^{q/2}\equiv -1\mod p $), also $\s\alpha^{q/2}\varphi^{t}, \, 0\le t \le p-1$. These last symmetries, if they do exist, are conjugate to $\s\alpha^{q/2}$.  The group $G$ contains four classes of symmetries, the central symmetry $\s$ and the conjugacy classes with representatives $\s\rho, \,  \s\rho\alpha$ and $\s\alpha^{q/2}$. The automorphism groups of the corresponding real curves are $G=N_{G}(\< \varphi,  \, \s \>)$ and $C_{2p} \times C_2=  N_{G}(\< \varphi, \s\rho \>)  = N_{G}(\< \varphi, \, \s\rho\alpha \>)$ and $G=N_{G}(\< \varphi,  \, \s\alpha^{q/2} \>)$.

\smallskip
\item $G =(C_{p}\rtimes D_{q})\times C_{2}=\< \varphi, \alpha \, \rho \, | \varphi^{p} =\alpha^{q} =\rho^{2}=\alpha^{-1}\varphi \alpha \varphi^{-i} =(\rho\alpha)^{2}=(\rho\varphi\rho\varphi^){2}=1  \> \times \< \s  \>$. we can assume that $i\neq -1$, since in that case we are in the conditions of Case 4a.  $\s$ is a central symmetry in $G$, $\s\rho\alpha^{r}, \, 0\le r\le q-1$ and $\s\rho\alpha^{r_0}\varphi^{t}, \, 1\le t \le p-1$ with $r_0$ such that $i^{r_0}\equiv 1\mod p $ (for instance $r_0=0$)  are also symmetries.

For $q\equiv 1\mod 2 $ the above are the only symmetries. The group $G$ contains two classes of symmetries, the central symmetry $\s$ and the conjugacy class of $\s\rho$. The automorphism groups of the corresponding real curves are $G=N_{G}(\< \varphi, \s \>)$ and $D_{p} \times C_2=  N_{G}(\< \varphi, \s\rho \>)$. 

For $q\equiv 0\mod 2 $ In this case the symmetries are the ones listed above together with $\s\alpha^{q/2}$ and, if $p\equiv 1\mod q$ (equivalently $i^{q/2}\equiv -1\mod p $), also $\s\alpha^{q/2}\varphi^{t}, \, 0\le t \le p-1$. These last symmetries, if they do exist, are conjugate to $\s\alpha^{q/2}$.  The group $G$ contains four classes of symmetries, the central symmetry $\s$ and the conjugacy classes with representatives $\s\rho, \,  \s\rho\alpha$ and $\s\alpha^{q/2}$. The automorphism groups of the corresponding real curves are $G=N_{G}(\< \varphi,  \, \s \>)$ and $D_{p} \times C_2=  N_{G}(\< \varphi, \s\rho \>)  = N_{G}(\< \varphi, \, \s\rho\alpha \>)$ and $G=N_{G}(\< \varphi,  \, \s\alpha^{q/2} \>)$.

\end{enumerate}

\smallskip
\item {\bf Case 4b} $G =(C_{p}\rtimes D_{q})\rtimes C_{2}=\< \varphi, \alpha \, \rho, \s \, | \varphi^{p} =\alpha^{q} =\rho^{2}=\s^{2}=\alpha^{-1}\varphi \alpha \varphi^{-i} =(\rho\alpha)^{2}=\rho\varphi\rho\varphi^{- 1}=(\s\varphi)^{2}=1, \,   \s\rho\s=\rho\alpha^{k}, \s\alpha\s=\alpha^{j}  \>$, 
with $ j= \pm 1, \, i^{d} \equiv 1\mod p $ where $d=mcd(q, p-1)$. We can assume that $\rho$ and $\varphi$ commute, otherwise we choose $\s\rho$ as generator of $D_q$. As $\s^{2}\rho\s^{2}=\rho$ we get that $k(1+j)\equiv 0\mod q$. So either $j=-1$ or $j=1,  k=0$; or $j=1, k=q/2$ if $q\equiv 0\mod 2 $.

We divide the case in subcases according to the action of $\s$  on $\< \rho, \, \alpha \>$: 

\begin{enumerate}

\smallskip
\item Case $j=-1$ The group $G =(C_{p}\rtimes D_{q})\rtimes C_{2}=\< \varphi, \alpha \, \rho, \s \, | \varphi^{p} =\alpha^{q} =\rho^{2}=\s^{2}=\alpha^{-1}\varphi \alpha \varphi^{-i} =(\rho\alpha)^{2}=\rho\varphi\rho\varphi^{- 1}=(\s\varphi)^{2}=1, \,   \s\rho\s=\rho\alpha^{k}, (\s\alpha)^{2}=1 \>$. The symmetries are $\s, \, \s\alpha^{r}, \, 0\le r \le q-1, \, \s\alpha^{r_0}\varphi^{t}, \, 1\le t \le p-1$, where $r_0$ satisfies that $i^{r_0}\equiv 1\mod p $. For $q\equiv 1\mod 2 $,  let $k/2$ denote the solution of $2s\equiv k\mod q$, for $q\equiv 0\mod 2 $ the parameter $k$ is even. Then the remaining symmetries are $\s\rho\alpha^{k/2}$ and, if $i^{k/2}\equiv 1\mod p $, the involutions $\s\rho\alpha^{k/2}\varphi^{t}, \, 1\le t \le p-1$. There are two conjugacy clases of symmetries if $q\equiv 1\mod 2 $; with representatives $\s$ and $\s\rho\alpha^{k/2}$. The automorphism groups of the corresponding real curves are $G=N_{G}(\< \varphi,  \, \s\rho\alpha^{k/2} \>)$ and $D_{p} \rtimes C_2=  N_{G}(\< \varphi, \s \>)$.  For $q\equiv 0 mod$  
there are three conjugacy classes of symmetries with representatives $\s$, $\s\alpha$ and $\s\rho\alpha^{k/2}$. The automorphism groups of the corresponding curves are $G=N_{G}(\< \varphi,  \, \s\rho\alpha^{k/2} \>)$ and $D_{p} \rtimes C_2=  N_{G}(\< \varphi, \s \>) = N_{G}(\< \varphi, \s\alpha \>)$.

\smallskip
\item Case $j=1, k=0$.  The group $G =(C_{p}\rtimes D_{q})\rtimes C_{2}=\< \varphi, \alpha \, \rho, \s \, | \varphi^{p} =\alpha^{q} =\rho^{2}=\s^{2}=\alpha^{-1}\varphi \alpha \varphi^{-i} =(\rho\alpha)^{2}=\rho\varphi\rho\varphi^{- 1}=(\s\varphi)^{2}=1 =  ( \s\rho)^{2}, \s\alpha\s=\alpha \>$. The symmetries are $\s, \, \s\rho\alpha^{r}, \, 0\le r \le q-1, \, \s\rho\alpha^{r_0}\varphi^{t}, \, 1\le t \le p-1$, where $r_0$ satisfies that $i^{r_0}\equiv 1\mod p $. For $q\equiv 1\mod 2 $,  let $q/2=0$ . Then the remaining symmetries are $\s\alpha^{q/2}$ and, if $i^{q/2}\equiv 1\mod p $, the involutions $\s\alpha^{k/2}\varphi^{t}, \, 1\le t \le p-1$. There are two conjugacy clases of symmetries if $q\equiv 1\mod 2 $; with representatives $\s$ and $\s\rho$. The automorphism groups of the corresponding real curves are $G=N_{G}(\< \varphi,  \, \s \>)$ and $D_{2p} \rtimes C_2=  N_{G}(\< \varphi, \s\rho \>)$.  For $q\equiv 0\mod 2 $ there are three conjugacy classes of symmetries with representatives $\s$, $\s\rho$, $\s\alpha^{q/2}$. The automorphism groups of the corresponding curves are $G=N_{G}(\< \varphi,  \, \s\alpha^{q/2} \>)$ and $D_{2p} \rtimes C_2=  N_{G}(\< \varphi, \s \>) = N_{G}(\< \varphi, \s\alpha \>)$. 

\smallskip
\item Case $q\equiv 0\mod 2 , \, j=1, k=q/2$. We have the group $G =(C_{p}\rtimes D_{q})\rtimes C_{2}=\< \varphi, \alpha \, \rho, \s \, | \varphi^{p} =\alpha^{q} =\rho^{2}=\s^{2}=\alpha^{-1}\varphi \alpha \varphi^{-i} =(\rho\alpha)^{2}=\rho\varphi\rho\varphi^{- 1}=(\s\varphi)^{2}=1, \,   \s\rho\s=\rho\alpha^{q/2}, \s\alpha\s=\alpha  \>$.  In this case the symmetries are $\s, \, \s\alpha^{q/2}$ and, if $i^{q/2}\equiv 1 \mod p$ also $\s\alpha^{k/2}\varphi^{t}, \, 1\le t \le p-1$. All the symmetries are conjugate and the automorphism group of the real curve is $D_{p}\rtimes C_{q}=N_{G}(\< \varphi, \s \>)$. 

\end{enumerate}

\smallskip
\item {\bf Case 4c} $G =(C_{p}\rtimes D_{q})\rtimes C_{2}=\< \varphi, \alpha \, \rho, \s \, | \varphi^{p} =\alpha^{q} =\rho^{2}=\s^{2}=\alpha^{-1}\varphi \alpha \varphi^{-i} =(\rho\alpha)^{2}=\rho\varphi\rho\varphi^{\pm 1}=\s\varphi\s\varphi^{-1}=1, \,   \s\rho\s=\rho\alpha^{k}, \s\alpha\s=\alpha^{j}  \>$, 
with $ j= \pm 1, \, i^{d} \equiv 1\mod p $ where $d=mcd(q, p-1)$. As $\s^{2}\rho\s^{2}=\rho$ we get that $k(1+j)\equiv 0\mod q$. So either $j=-1$ or $j=1,  k=0$; or $j=1, k=q/2$ if $q\equiv 0\mod 2 $.

We divide the case in subcases according to the action of $\s$  on $\< \rho, \, \alpha \>$: 

\begin{enumerate}

\smallskip
\item Case $j=-1$ The group $G =(C_{p}\rtimes D_{q})\rtimes C_{2}=\< \varphi, \alpha \, \rho, \s \, | \varphi^{p} =\alpha^{q} =\rho^{2}=\s^{2}=\alpha^{-1}\varphi \alpha \varphi^{-i} =(\rho\alpha)^{2}=\rho\varphi\rho\varphi^{\pm 1}=sigma\varphi\s\varphi^{-1}=1, \,   \s\rho\s=\rho\alpha^{k}, (\s\alpha)^{2}=1 \>$. The symmetries are $\s, \, \s\alpha^{r}, \, 0\le r \le q-1, \, \s\alpha^{r_0}\varphi^{t}, \, 1\le t \le p-1$, where $r_0$ satisfies that $i^{r_0}\equiv -1\mod p $. For $q\equiv 1\mod 2 $,  let $k/2$ denote the solution of $2s\equiv k\mod q$, for $q\equiv 0\mod 2 $ the parameter $k$ is even. Then the remaining symmetries are $\s\rho\alpha^{k/2}$ and, for the semidirect products satisfiyng either $i^{k/2}\equiv 1\mod p $ if $\rho\varphi\rho =\varphi$ or $i^{k/2}\equiv -1\mod p $ if $\rho\varphi\rho =\varphi^{-1}$, the involutions $\s\rho\alpha^{k/2}\varphi^{t}, \, 1\le t \le p-1$. There are two conjugacy clases of symmetries if $q\equiv 1\mod 2 $; with representatives $\s$ and $\s\rho\alpha^{k/2}$. The automorphism groups of the corresponding real curves are $G=N_{G}(\< \varphi,  \, \s\rho\alpha^{k/2} \>)$ and $C_{2p} \rtimes C_2=  N_{G}(\< \varphi, \s \>)$.  For $q\equiv 0 mod$ 
there are three conjugacy classes of symmetries with representatives $\s$, $\s\alpha$ and $\s\rho\alpha^{k/2}$. The automorphism groups of the corresponding curves are $G=N_{G}(\< \varphi,  \, \s\rho\alpha^{k/2} \>)$ and $C_{2p} \rtimes C_2=  N_{G}(\< \varphi, \s \>) = N_{G}(\< \varphi, \s\alpha \>)$.

\smallskip
\item Case $j=1, k=0$.  This case is Case 4a, where $\s$ is central in $G$.

\smallskip
\item Case $q\equiv 0\mod 2 , \, j=1, k=q/2$. We have the group $G =(C_{p}\rtimes D_{q})\rtimes C_{2}=\< \varphi, \alpha \, \rho, \s \, | \varphi^{p} =\alpha^{q} =\rho^{2}=\s^{2}=\alpha^{-1}\varphi \alpha \varphi^{-i} =(\rho\alpha)^{2}=\rho\varphi\rho\varphi^{- 1}=(\s\varphi)^{2}=1, \,   \s\rho\s=\rho\alpha^{q/2}, \s\alpha\s=\alpha  \>$.  In this case the symmetries are $\s, \, \s\alpha^{q/2}$ and, if $i^{q/2}\equiv 1 \mod p$ also $\s\alpha^{k/2}\varphi^{t}, \, 1\le t \le p-1$. All the symmetries are conjugate and the automorphism group of the real curve is $C_{2p}\rtimes C_{q}=N_{G}(\< \varphi, \s \>)$. 

\end{enumerate}

\smallskip

\item Case 5. The $\pm-$automorphism group $G=Aut^{\pm}(X)$ of the real Riemann surface $X$ is one of $C_{p}\rtimes_{2}\Sigma_{4}
,\,D_{p}\times A_{4},\,D_{p}\times\Sigma_{4},\,D_{p}\times A_{5}$ if
$\<\varphi,\s\>=D_{p}$, or $C_{p}\times\Sigma_{4},\,C_{2p}\times A_{4},\,C_{2p}\times\Sigma_{4}
,\,C_{2p}\times A_{5}$ if $\<\varphi,\s\>=C_{2p}$. 

\begin{enumerate}

\item There is one conjugacy class of symmetries in $G=Aut^{\pm}(X)= C_{p}\rtimes_2 \Sigma_4=\< \varphi \>\rtimes_2 \Sigma_4$, with $\Sigma_4=\< \tau, \alpha \, | \alpha^3=\tau^2={\alpha\tau}^4=1 \>$; with representative $\tau$. The automorphism group of these real curves  equals $C_p\times C_2\times C_2 \times C_2$.

\item There are two conjugacy classes of symmetries in $G=Aut^{\pm}(X)= D_{p}\times A_4$ or $G= C_{2p}\times A_4$, with $A_4=\< \tau, \alpha \, | \alpha^3=\tau^2={\alpha\tau}^3=1 \>$, with representatives $\s$ and $\s\tau$. The automorphism groups of the corresponding real curves are either $G$ for the real curve associated to the symmetry $\s$ and  $C_{2p}\times C_2\times C_2$ if $\<\varphi,\s\>=C_{2p}$, or $D_{p}\times C_2\times C_2$ if $\<\varphi,\s\>=D_{p}$ for real curves given by the conjugacy class of the symmetry $\s\tau$.

\item There are two conjugacy classes of symmetries in $G=Aut^{\pm}(X)= D_{p}\times A_5$ or $G= C_{2p}\times A_5$, with $A_5=\< \tau, \alpha \, | \alpha^5=\tau^2={\alpha\tau}^3=1 \>$, with representatives $\s$ and $\s\tau$.. The automorphism groups of the corresponding real curves are either $G$ for the real curve associated to the symmetry $\s$ and  $C_{2p}\times C_2$ if $\<\varphi,\s\>=C_{2p}$, or $D_{p}\times C_2$ if $\<\varphi,\s\>=D_{p}$ for real curves given by the conjugacy class of the symmetry $\s\tau$.

\item There are three conjugacy classes of symmetries in $G=Aut^{\pm}(X)= D_{p}\times \Sigma_4$ or $G= C_{2p}\times \Sigma_4$, with $\Sigma_4=\< \tau, \alpha \, | \alpha^3=\tau^2={\alpha\tau}^4=1 \>$, the conjugacy classes have representatives $\s, \, \s\tau$ and $\s(\tau\alpha)^2$. The automorphism groups of the corresponding real curves are either $G$ for the real curve associated to the central symmetry $\s$ and $C_{2p}\times C_2\times C_2\times C_2$ if $\<\varphi,\s\>=C_{2p}$, or $D_{p}\times C_2\times C_2\times C_2$ if $\<\varphi,\s\>=D_{p}$ for real curves given by the conjugacy classes of the symmetries $\s\tau$ and $\s(\tau\alpha)^2$.

\end{enumerate}

\smallskip
\item Exceptional Case 1. The $\pm$-automorphism group is $G=((C_{2}\times C_{2})\rtimes_{3}C_{9})\rtimes
_{2}C_{2} = ((\< \tau_1, \tau_2 \>)\rtimes_3 \< \widehat{\varphi} \>)\rtimes_2 \< \s \>$ for $p=3$ and $\G=\Sigma_{4}$ where $\<
\varphi,\s\>=D_{p}$ or
$G=((C_{2}\times C_{2})\rtimes_{3}C_{9})\times C_{2}= ((\< \tau_1, \tau_2 \>)\rtimes_3 \< \widehat{\varphi} \>)\times \< \s \>$ for $p=3$ and
$\G=A_{4}\times C_{2}$ where $\<\varphi,\s\>=C_{2p}$

In the first case the group $G$ has two conjugacy classes of symmetries with representatives $\s$ and $\s\widehat{\varphi}^3\tau_1$. Real curves given by the symmetry $\s$ have automorphism group $D_{3}\times C_{2}\times C_2=N_{G}(\< \widehat{\varphi}^3, \s \>)$; real curves given by the symmetry $\s\widehat{\varphi}^3\tau_1$ have automorphism group $(C_{6}\times C_2)\rtimes C_{2}=N_{G}(\< \widehat{\varphi}^3, \s\widehat{\varphi}^3\tau_1 \>)$.

In the second case the group $G$ has two conjugacy classes of symmetries with representatives $\s$ (central symmetry) and $\s\tau_1$. Real curves given by the central symmetry $\s$ have automorphism group $G=N_{G}(\< \widehat{\varphi}^3, \s \>)$; real curves given by the symmetry $\s\tau_1$ have automorphism group $(C_{6}\times C_2)\times C_{2}=N_{G}(\< \widehat{\varphi}^3, \s\tau_1 \>)$.

\item Exceptional Case 2. $G=(C_{p}\times C_{2}\times C_{2})\rtimes_{3}C_{6}=\< \varphi, \tau_1, \tau_2 \> \rtimes_{3}\< \alpha \>, \, \s=\alpha^{3}$
for $p\equiv1\mod 6$, $\G=A_{4}\times C_{2}$ and
$\<\varphi,\s\>=C_{2p}$. Again $G$ two conjugacy classes of symmetries, the central symmetry $\s$ and the conjugacy class with representative $\s\tau_1$. Real curves given by the central symmetry $\s$ have automorphism group $G=N_{G}(\< \varphi, \s \>)$; real curves given by the symmetry $\s\tau_1$ have automorphism group $(C_{p}\times C_2)\times C_{2}\times C_2=N_{G}(\< \varphi, \s\tau_1 \>)$.

\item Exceptional Case 3. $G=(((C_{2}\times C_{2})\rtimes_{3}C_{9})\rtimes
_{2}C_{2})\times C_{2} = ((\< \tau_1, \tau_2 \> \rtimes_{3}\< \widehat{\varphi} \>)\rtimes
_{2}\< \alpha \>)\times \< \s \>$ for $p=3$ and $\G=\Sigma_{4}\times C_{2}$. 
There are three conjugacy classes of symmetries in $G$,with representatives $\s, \, \s\alpha$ and $\s\alpha\widehat{\varphi}^3\tau_1$.
 The automorphism groups of the corresponding real curves are either $G$ for the real curve associated to the central symmetry $\s$, $(C_{6}\times C_2\times C_2)\rtimes C_2$  for real curves given by the conjugacy class of the symmetry $\s\alpha\widehat{\varphi}^3\tau_1$, and $D_{3}\times C_2\times C_2\times C_2$  for real curves given by the conjugacy class of the symmetry $\s\alpha$.
\end{enumerate}

Notice that as in the case of real hyperelliptic surfaces to provide defining equations one has to look not only to the automorphism groups but also to the ramification of the $p$-gonal morphism. When calculating equations of $p$-gonal Riemann surfaces the ramification set on the sphere was invariant by the complex conjugation, in other words, the equations already found are equations for the real sublocus of the $p$-gonal locus.


\end{document}